\newtheorem{prop}{Proposition}[section]
\newtheorem{lem}[prop]{Lemma}
\newtheorem{theo}[prop]{Theorem}
\newtheorem{theorem}[prop]{Theorem}
\newtheorem*{mthm}{Main Theorem}
\theoremstyle{definition}
\newtheorem{rem}[prop]{Remark}
\newtheorem{defi}[prop]{Definition}
\newcommand{\Sp} {\mathop{\mathrm{Sp}}}
\newcommand{\MT} {\mathop{{\mathrm{MT}}}}
\newcommand{\Gal} {\mathop{\mathrm{Gal}}}
\newcommand{\End} {\mathop{\mathrm{End}}}
\newcommand{\GSp} {\mathop{\mathrm{GSp}}}
\newcommand{\SL} {\mathop{\mathrm{SL}}}
\newcommand{\GL} {\mathop{\mathrm{GL}}}
\newcommand{\SO} {\mathop{\mathrm{SO}}}
\newcommand{\GSO} {\mathop{\mathrm{GSO}}}
\newcommand{\Z}{\mathbb{Z}}
\newcommand{\Q}{\mathbb{Q}}
\newcommand{\G}{\mathbb{G}}
\newcommand{\F}{\mathbb{F}}
\newcommand{\C}{\mathbb{C}}
\newcommand{\Hdg} {\mathop{\mathrm{Hdg}}}
\begin{document}
\keywords{Abelian variety, Galois representation, Mumford-Tate conjecture}
\subjclass[2010]{11G10, 14K15}

\title{Remarks on a theorem of Pink in presence of bad reduction}
\author{Wojciech Gajda 
and Marc Hindry}
\maketitle

\begin{abstract}
In this note we prove new cases of the Mumford-Tate conjecture  by extending a theorem of Richard Pink for abelian varieties without nontrivial endomorphisms and with bad 
semistable reduction. We use quadratic pairs introduced by J.G.Thompson in the seventies, an important tool in the 
program of classifying all simple finite groups.  Proof of our main result 
applies the classification of the quadratic pairs as described by Premet and Suprunenko.  Along the way we reprove and generalize a theorem of Chris Hall on the image of Tate module representation of abelian variety as above, to all possible values of its toric dimension. 
\end{abstract}

\section{Introduction}

Let $A$ be a simple abelian variety  of dimension $g$ defined over a number field $K$. Suppose that $A/K$ has bad (split) semistable reduction at some place $v$ of the field $K$, denote $\F_v$ the finite residual field.
Let $\mathcal{A}\rightarrow{\rm spec}(\mathcal{O}_K)$ be the N\'eron model; the connected component  of its special fibre lies in an exact sequence of algebraic ${\bf F}_v$-groups:
\begin{equation}
0\longrightarrow T={\Bbb G}_m^s\longrightarrow\mathcal{A}_v^0\longrightarrow B_v\longrightarrow 0,
\end{equation}
where $B_v$ is an abelian variety over $\F_v$.
We call $s=\dim T$ {\it the toric dimension}   of $A$ at $v$.

The Mumford-Tate group $\MT(A)$ is an algebraic $\Q$-group (cf. for example \cite{pink1}, for the definition) defined in terms of the complex structure of $A(\C)$, that comes together with a representation on $V{:=}H^1(A(\C),\Q)$. When the abelian variety is of type  I, II or III  in Albert classification, the Hodge group $\Hdg(A)$ is the derived group of $\MT(A)$ and we have $\MT(A){=}\G_m\Hdg(A)$.
Denote $T_{\ell}(A)$ the Tate module and $V_{\ell}(A){:=}T_{\ell}(A)\otimes_{\Z_{\ell}}\Q_{\ell}$; the central object of our study is the Galois representation $\rho_{\ell}:\Gal(\bar{K}/K)\rightarrow \GL(V_{\ell}(A))$. We denote $G_{\ell}$ the connected component of the Zariski closure of $\rho_{\ell}(\Gal(\bar{K}/K))$ in $\GL(V_{\ell}(A))\cong\GL_{2g,\Q_{\ell}}$, it is an algebraic $\Q_{\ell}$-group and we still denote $\rho_{\ell}$ its representation  on $V_{\ell}$.
Via the comparison isomorphism $V_{\ell}{\cong} V\otimes_{\Q}\Q_{\ell}$ we can compare $\MT(A)\otimes_{\Q}\Q_{\ell}$ and $G_{\ell}$ inside $\GL_{2g,\Q_{\ell}}$. A deep result due to Borovo\"i \cite{bor1}, Deligne \cite{delm} and Piatetski-Shapiro \cite{piat} shows that $G_{\ell}$ is an algebraic subgroup of $\MT(A)\otimes_{\Q}\Q_{\ell}$,  the Mumford-Tate conjecture asserts the equality of these two algebraic groups. We refer the reader to a variety of surveys on the current state of arts of the Mumford-Tate conjecture,  which is available online.  

It is known that the Mumford-Tate conjecture holds for abelian varieties over number fields,  if and only if,  it holds for simple ones cf. \cite{com}.  Further, it is easy to see that replacing $K$ by a finite extension or $A$ by an isogenous abelian variety does not alter the validity of Mumford-Tate conjecture; it is a deeper result (cf. \cite{lp3}, Theorem 4.3) that Mumford-Tate conjecture does not depend on the chosen prime $\ell$. Hence,  without loosing any generality, we can concentrate on simple abelian varieties over $K$, assume semistability and pick any convenient $\ell$.

\begin{mthm}\label{main}
Let $A$ be an abelian variety defined over a number field $K$. Suppose that $\End(A)=\Z$ and there is a place of bad semistable reduction with toric dimension $s$. Then $\MT(A)_{\Q_{\ell}}=\GSp_{2g,\Q_{\ell}}=G_{\ell}$,   and  the Mumford-Tate conjecture holds  for $A,$ except may be in following exceptional cases.
\begin{enumerate}
\item There is an odd integer $r\geq 3$ such that
$g=\frac{1}{2}{2r\choose r}$ and $s={2r-2\choose r-1}$.
\item There is an integer $t\geq 4$ with $t{\equiv} 0$ or $1 \mod 4,$ such that $g=2^t$ and $s=g$ or $g/2$.
\end{enumerate}
\end{mthm}

\noindent We remark that the hypotheses in the theorem are {\it generic} in the sense that a generic abelian variety of dimension $g$ is simple with endomorphism ring $\Z$, whereas abelian varieties with some bad reduction are far more numerous that abelian varieties with (potential) good reduction everywhere, the typical case being for $g=1$ where integers are rarer than rational numbers.
The first instances of the first exceptional case in Main Theorem occur for $(g,s)=(10,6)$, $(84,70)$, $(1716,924)$ etc.The first instances of the second exceptional case occur for $(g,s)=(16,8)$, $(16,16)$, $(32,16)$, $(32,32)$, $(256,128)$, $(256,256)$ etc. 
\bigskip

\noindent
{\sl Comparison with former results}
\medskip

\noindent
Compared to former attemps at the conjecture (which were often based on computation of Lie algebras of algebraic groups,  e.g.,  \cite{bgk, bgk1, bgk2}) our method is different. The novelty of this note approach depends on applying theory of representations of finite groups of Lie type directly to comparison of special fibres of two Chevalley group schemes in question, and then lifting equality to their $\Q_{\ell}-$points (cf.  Lemma \ref{marcnikolay}) in order to confirm the conjecture. Our main theorem shall be compared to {a result of Pink (under the assumption $\End(A){=}\Z$) and to {the following }results of Noot,  and Chris Hall.}

\begin{theorem}\label{pinkmtc}(Pink  \cite{pink1}, Theorem 5.14) Let $A$ be an abelian variety of dimension $g$ with endomorphism ring $\Z$. 
Assume that $g$ is neither
\begin{itemize}
\item half of a $k^{\rm th}$ power for {any} odd $k>1$, nor
\item of the shape $\frac{1}{2}{2m\choose m}$ for {any} odd $m\geq 3$.
\end{itemize}
Then $G_{\ell}{=}\GSp_{2g,\Q_{\ell}}$ and the Mumford-Tate conjecture holds for $A$.
\end{theorem}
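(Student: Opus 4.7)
The plan is to follow Pink's own strategy, based on the classification of minuscule representations of reductive groups. The three pillars are: (i) structural properties of $G_\ell$ coming from Faltings and Bogomolov, (ii) Pink's cocharacter theorem, and (iii) a classification of pairs (semisimple group, minuscule representation) combined with the arithmetic hypothesis on $g$.

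First I would invoke Faltings' Tate-conjecture theorem: the commutant of $G_\ell$ in $\End(V_\ell)$ equals $\End^0(A) \otimes \Q_\ell$. Since $\End(A)=\Z$, the representation $\rho_\ell$ is absolutely irreducible, and $G_\ell^{\mathrm{der}}$ still acts irreducibly on $V_\ell \otimes \bar{\Q}_\ell$. The Weil pairing places $G_\ell$ inside $\GSp_{2g,\Q_\ell}$, while Bogomolov's theorem ensures that the homothety subgroup $\G_m \cdot \Id$ lies in $G_\ell$. By \cite{bor1,delm,piat} we already have $G_\ell \subseteq \MT(A)_{\Q_\ell} \subseteq \GSp_{2g,\Q_\ell}$, so it suffices to prove the single equality $G_\ell = \GSp_{2g,\Q_\ell}$.

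Second comes the technical heart of \cite{pink1}: after extending scalars to $\bar{\Q}_\ell$, the group $G_\ell$ contains a cocharacter $\mu:\G_m \to G_{\ell,\bar{\Q}_\ell}$ (a \emph{weak Mumford--Tate cocharacter}) whose action on $V_\ell$ has only weights $0$ and $1$, each with multiplicity $g$. This $\mu$ encodes $\ell$-adically the Hodge type of $A$, and forces $(G_\ell^{\mathrm{der}}, V_\ell)$ to be an irreducible self-dual symplectic representation of a connected semisimple group admitting a minuscule cocharacter of prescribed balanced $(0,1)$-weight type.

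Third, I would apply the classification of such pairs. Let $H \subseteq \Sp_{2g,\bar{\Q}_\ell}$ be connected semisimple, acting irreducibly, containing a weak Mumford--Tate cocharacter. Decomposing $H$ into almost-simple factors $H_1 \times \cdots \times H_r$, irreducibility gives an exterior tensor product $V_1 \boxtimes \cdots \boxtimes V_r$, and the minuscule hypothesis forces each $V_i$ to be a minuscule representation of $H_i$. The minuscule list is classical (Bourbaki, \emph{Groupes et algèbres de Lie}, Ch.~VIII): fundamental $\Lambda^k$ for $A_n$, standard for $C_n$, vector and half-spin for $D_n$, and the $27$- and $56$-dimensional exceptional representations for $E_6$ and $E_7$. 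Requiring a non-degenerate alternating form on $V_1 \boxtimes \cdots \boxtimes V_r$ demands an odd number of symplectic factors (the rest orthogonal). The two generic obstruction families that survive are: $r$-fold tensor powers of the standard representation of $\Sp_{2m}$ with $r$ odd $>1$, giving $2g=(2m)^r$; and $\SL_{2m}$ acting on $\Lambda^m$ with $m$ odd $\geq 3$, giving $2g=\binom{2m}{m}$.

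Fourth, the arithmetic hypothesis on $g$ eliminates precisely the obstruction families: $g$ not being half of an odd $k$-th power kills every tensor-power case, while $g \neq \tfrac12\binom{2m}{m}$ kills the $\SL_{2m}/\Lambda^m$ case; the residual spin and $E_7$ minuscule possibilities have dimensions that are either absorbed into the first family (powers of $2$ are of the form $(2a)^k/2$ whenever the exponent is a multiple of an odd $k>1$) or eliminated by a direct dimension/weight-type comparison as in \cite{pink1}. The only remaining possibility is $H = \Sp_{2g}$, whence $G_\ell^{\mathrm{der}}=\Sp_{2g,\Q_\ell}$; combined with the Bogomolov scalars this yields $G_\ell=\GSp_{2g,\Q_\ell}$, and a fortiori $\MT(A)_{\Q_\ell}=\GSp_{2g,\Q_\ell}$.

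The main obstacle is Step 3, the classification of connected semisimple subgroups $H \subseteq \Sp_{2g}$ acting irreducibly and containing a weak Mumford--Tate cocharacter. The cocharacter condition (weights only $0$ and $1$, balanced multiplicity) is what cuts the possibilities down to minuscule representations and a finite tensor-product enumeration; without it the list of irreducible semisimple subgroups of $\Sp_{2g}$ would be vastly larger. Once this list is established, the arithmetic constraints on $g$ surgically remove every non-symplectic entry.
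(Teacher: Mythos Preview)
The paper does not provide its own proof of this statement: Theorem~\ref{pinkmtc} is quoted verbatim from Pink \cite{pink1}, Theorem~5.14, purely as background against which the Main Theorem is compared. So there is no in-paper argument to match your proposal against.

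That said, your sketch is a faithful outline of Pink's original strategy and is essentially correct. A couple of small sharpenings are worth noting. First, the exceptional minuscule representations of $E_6$ and $E_7$ are eliminated already at Step~3, not Step~4: the weak Mumford--Tate cocharacter condition (weights exactly $\{0,1\}$ on $V_\ell$) is strictly stronger than ``minuscule'' in the Bourbaki sense, and it is precisely this height-one condition that restricts the list to classical types, as in Serre's table reproduced in the paper. Your phrase ``eliminated by a direct dimension/weight-type comparison'' is correct in spirit, but the mechanism is the cocharacter weight profile, not a numerical coincidence on $g$. Second, the isotypicality of the tensor decomposition (all simple factors isomorphic, permuted transitively by Galois) is what forces $2g=m^s$ with a \emph{single} $m$; you implicitly use this when reducing the first obstruction family to ``$g$ is half an odd power'', so it deserves explicit mention. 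With these two points made precise, your argument is Pink's.
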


\begin{theorem}\label{4noot} {\rm (Noot \cite{noot1})} Let $A$ be an abelian variety of dimension 4 with endomorphism ring $\Bbb Z$. Assume that $A$ is defined over a number field and has a place with bad semistable reduction. { Then $G_{\ell}=\GSp_{8,\Q_{\ell}}$ and the Mumford-Tate conjecture holds for $A$.}
\end{theorem}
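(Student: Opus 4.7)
The plan is to combine Pink's Theorem \ref{pinkmtc} with a monodromy analysis at the place of bad semistable reduction. Since $\End(A) = \Z$, Faltings' semisimplicity theorem ensures that $V_\ell$ is absolutely irreducible as a $G_\ell$-module, and compatibility with the Weil pairing gives $G_\ell \subseteq \GSp_{8,\Q_\ell}$. For $g=4$, Pink's numerical dichotomy leaves exactly one exceptional case to treat: $4 = \tfrac{1}{2} \cdot 2^{3}$ realizes $g$ as half of an odd power (with $k=3$, $n=2$), whereas $\tfrac{1}{2}\binom{2m}{m}=4$ has no solution for odd $m \geq 3$. The task therefore reduces to excluding this exceptional case.

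In the Pink-exceptional regime, his classification shows that $G_\ell^{\rm der}$ is, up to isogeny, a product $(\SL_2)^3$ acting on $V_\ell \cong V_1\otimes V_2\otimes V_3$ by the tensor product of three standard representations, with $\dim V_i=2$. Every nilpotent element of $\Lie(G_\ell^{\rm der})$ takes the form $N = X_1\otimes I\otimes I + I\otimes X_2\otimes I + I\otimes I\otimes X_3$ with $X_i\in\mathfrak{sl}(V_i)$ nilpotent (by functoriality of Jordan decomposition). The hypothesis of bad semistable reduction at $v$ produces, via Grothendieck's monodromy theorem, a non-zero nilpotent $N\in\Lie(G_\ell)$ satisfying $N^2=0$ and $\rk(N)=s\geq 1$. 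Since $X_i^2=0$ in $\mathfrak{sl}_2$, the condition $N^2=0$ reduces to
\begin{equation*}
X_1\otimes X_2\otimes I \;+\; X_1\otimes I\otimes X_3 \;+\; I\otimes X_2\otimes X_3 \;=\; 0,
\end{equation*}
which forces at most one $X_i$ to be non-zero. In that case $N$ has rank $1\cdot 2\cdot 2 = 4$, so necessarily $s=4=g$.

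The last and most delicate step is to eliminate even the boundary case $s = g$. Here one invokes the arithmetic of $A$: in the $(\SL_2)^3$-regime the tensor decomposition of $V_\ell$ must be respected by Frobenii at all places of good reduction, so the integral Frobenius characteristic polynomial $P_w(T)\in\Z[T]$ factors compatibly. Chebotarev density combined with the integrality of $P_w$ then contradicts the hypothesis $\End(A)=\Z$, which forces $V_\ell$ to remain absolutely irreducible over any finite extension. Once the exceptional case is excluded one has $G_\ell = \GSp_{8,\Q_\ell}$, and the standard sandwich $G_\ell \subseteq \MT(A)_{\Q_\ell} \subseteq \GSp_{8,\Q_\ell}$ yields $\MT(A)_{\Q_\ell} = \GSp_{8,\Q_\ell}$, proving the Mumford-Tate conjecture for $A$. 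I expect the boundary elimination to be the main obstacle, as it is where purely Lie-theoretic rigidity gives out and the arithmetic of $A$ must be invoked directly; the earlier reductions are formal and follow Pink's pattern.
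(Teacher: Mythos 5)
Your first two steps are sound and run parallel to the paper's strategy: reducing via Pink's Theorem \ref{pinkmtc} to the single exceptional shape $2g = 2^3$, so $G'_\ell$ would be an inner form of $(\SL_2)^3$ acting by $\mathrm{Std}^{\otimes 3}$, and then checking that a nilpotent $N$ with $N^2=0$ in this Lie algebra must have all but one component zero and therefore rank $4$. This is essentially the $\Q_\ell$-linearised version of the observation, which the paper makes mod $\ell$ in its Case (A), that a quadratic (i.e.\ $2$-unipotent) element in a tensor-product quadratic pair lies in a single central-product block (Lemma \ref{kunip}).

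The genuine gap is in your final step. Observing that the toric dimension would have to equal $4$ does not by itself rule out the $(\SL_2)^3$ configuration, and the arithmetic argument you sketch does not close the case. The tensor decomposition $V_\ell\cong V_1\otimes V_2\otimes V_3$ exists only over an extension of $\Q_\ell$ and is precisely the structure that Pink's classification allows; it is fully compatible with $\End(A)=\Z$ and with absolute irreducibility of $V_\ell$ over any finite extension of $K$, so no contradiction with Chebotarev or with the integrality of Frobenius polynomials emerges. What is actually missing is the use of the transitivity of the Galois action permuting the three factors: the factors $G_1,G_2,G_3$ are conjugate under this action, so a $\Q_\ell$-rational $N$ supported in a single factor would be forced, by applying the permuting elements, into every factor, hence into their (trivial) intersection, contradicting $N\neq 0$. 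The paper realizes this exclusion on the mod-$\ell$ side: the subgroup $\bar R$ generated by inertia and its conjugates acts irreducibly, is generated by the conjugacy class of a quadratic element, and Thompson's central product theorem plus transitivity force the number of simple blocks to be one. Your Lie-algebra computation is a clean substitute for the quadratic-pair step, but you stop one idea short of the contradiction; replacing the Chebotarev paragraph with the transitivity argument would complete the proof.
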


\begin{theo}\label{cha}(C. Hall \cite{hall}) Let $A$ be an abelian variety of dimension $g$ with endomorphism ring $\Z$. Assume that $A$ has a place of bad semistable reduction with toric dimension $s=1. $ {Then $G_{\ell}{=}\GSp_{2g,\Q_{\ell}}$ and the Mumford-Tate conjecture holds for $A$.}
\end{theo}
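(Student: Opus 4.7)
The plan is to exploit the semistable reduction at $v$ to produce an explicit unipotent element in $G_\ell$, then apply a recognition theorem for irreducible algebraic subgroups of $\Sp_{2g}$ that contain a transvection. The main obstacle will be the group-theoretic recognition step; the geometric/arithmetic input is relatively cheap.

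First, I invoke Grothendieck's $\ell$-adic monodromy theorem for semistable abelian varieties: after a finite base change (which does not alter the Mumford-Tate conjecture, by \cite{lp3}) the inertia subgroup at $v$ acts on $V_\ell$ through $\sigma \mapsto \exp(t_\ell(\sigma) N)$, where $t_\ell$ is the $\ell$-adic tame character and $N$ is a nilpotent monodromy operator. The theory of N\'eron models identifies the rank of $N$ with the toric dimension $s$, and Galois-equivariance of the Weil pairing forces $N$ to lie in the symplectic Lie algebra. For $s=1$ this means $N$ is a rank-one alternating nilpotent endomorphism, so picking $\sigma$ with $t_\ell(\sigma)=1$ gives an element $u = \exp(N) = \mathrm{id} + N$ of $\rho_\ell(\Gal(\bar{K}/K))$ which is a symplectic transvection. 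Since $u$ is unipotent it automatically lies in the identity component, i.e.\ $u \in G_\ell(\Q_\ell)$.

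Next, I verify that $G_\ell$ acts absolutely irreducibly on $V_\ell$. Faltings' isogeny theorem gives $\End_{G_\ell}(V_\ell) = \End_K(A) \otimes_\Z \Q_\ell = \Q_\ell$, and for a semisimple representation in characteristic zero this equality of commutant with base field is equivalent to absolute irreducibility. The polarization yields $G_\ell \subset \GSp_{2g,\Q_\ell}$. So $G_\ell$ is a connected, absolutely irreducible algebraic subgroup of $\GSp_{2g,\Q_\ell}$ that contains a symplectic transvection.

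The technical heart of Hall's argument is the following recognition principle, which is where I expect the bulk of the work: any connected absolutely irreducible algebraic subgroup $H \subset \Sp_{2g,\bar{\Q}_\ell}$ containing a (long-root-type) symplectic transvection must equal $\Sp_{2g}$. This rests on classical results of Kantor and Zalesski-Serezhkin on (finite) linear groups generated by transvections, lifted to the connected reductive setting via the classification of irreducible representations of semisimple algebraic groups and the analysis of which such representations admit a long root element in their image. Applying it to the derived group of $G_\ell$ yields $G_\ell \supset \Sp_{2g,\Q_\ell}$, and the similitude character furnished by the $\ell$-adic cyclotomic character supplies the extra central torus, so $G_\ell = \GSp_{2g,\Q_\ell}$. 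The chain $G_\ell \subset \MT(A) \otimes_\Q \Q_\ell \subset \GSp_{2g,\Q_\ell}$ then collapses to equalities, giving $\MT(A) = \GSp_{2g,\Q}$ and the Mumford-Tate conjecture for $A$.
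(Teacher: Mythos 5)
Your proposal is correct in outline and follows Hall's original strategy: use Grothendieck's semistable monodromy to produce a symplectic transvection in $G_\ell$, invoke Faltings for absolute irreducibility of the representation, and then apply a recognition theorem for absolutely irreducible connected subgroups of $\Sp_{2g}$ containing a transvection. The paper reproves Hall's theorem by a genuinely different route, as the $s=1$ special case of its Main Theorem for arbitrary toric dimension. It replaces the transvection-recognition step by Thompson's quadratic pairs together with Pink's classification of minuscule symplectic representations and Premet--Suprunenko's drop computations: in Case (A), a mod-$\ell$ quadratic-pair argument shows the subgroup generated by inertia cannot live in a nontrivial tensor factorization of $V_\ell$, so $G'_\ell$ is simple; in Case (B), one lists the symplectic minuscule possibilities and observes that for drop $1$ only $(\Sp_{2g},\mathrm{Std})$ survives, the other candidates forcing drop at least $6$ or at least $g/2$. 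What the paper's route buys is uniformity in $s$ and the explicit list of exceptional pairs $(g,s)$ in the Main Theorem; your route is cleaner for $s=1$ specifically but defers the hard part to the recognition theorem, which you correctly flag as the technical heart. One precision worth adding there: the references you gesture at (Kantor, Zalesski--Serezhkin) are recognition theorems for \emph{finite} linear groups generated by transvections, while what you actually need is the corresponding statement for connected algebraic subgroups of $\Sp_{2g}$ in characteristic zero. Making that bridge rigorous requires either reducing mod $\ell$ and lifting back (in the spirit of the paper's Lemma \ref{marcnikolay}), or a direct highest-weight analysis of which irreducible representations of simple algebraic groups contain a rank-one unipotent in their image --- which, in effect, is exactly the minuscule/Premet--Suprunenko computation the paper performs.
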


\noindent
Our proof of Main Theorem uses quadratic pairs introduced by J.G.Thompson in \cite{qp}. We apply the classification of the quadratic pairs described by Premet and 
Suprunenko in \cite{premet}.  Note that along the way we reprove and generalize Theorem \ref{cha} to all possible values of $s.$

\section{Quadratic pairs and minuscule representations}
\begin{defi} A {\it quadratic pair} $(G,\rho)$  is formed by a group $G$, a faithful irreducible representation $\rho: G\rightarrow\GL(V)$ such that the subset $\mathcal{Q}:=\{g\in G\setminus\{1\}\;|\; (\rho(g)-1_V)^2=0\}$ is non empty and generates $G$. (This is Thompson's definition \cite{qp} when we assume $G$ finite and $V$ a $\F_p$-vector space; otherwise we understand ``generates" as ``topologically generates"). A quadratic pair is {\it polarised} 
(orthogonal or symplectic) if $G$ leaves invariant a symmetric or antisymmetric non degenerate bilinear form. A quadratic pair is {\it simple} if $G$ is equal to its commutator group and the quotient of $G$ by its centre is simple.
\end{defi}
The main properties of quadratic pairs are a decomposition theorem into (almost) simple blocs and a classification of these simple blocs which are due to Thompson \cite{qp}, see also \cite{ho1,betty} and complemented by Premet-Suprunenko in \cite{premet}.
\begin{prop} Let $(G,\rho)$ be a quadratic pair with $V$ an $\F_p$-vector space.
\begin{enumerate}
\item (Thompson's central product theorem) There exist simple quadratic pairs $(G_i,\rho_i)$ such that $G{=}G_1\cdots G_t$, and $(G,\rho)$ is isomorphic to $\rho_1\otimes\dots\otimes\rho_t:G_1\cdots G_t\rightarrow \GL(V_1\otimes{\dots}\otimes V_t)$. Further, if the pair $(G,\rho)$ is polarised, then each $(G_i,\rho_i)$ is polarised.
\item (Thompson, Premet-Suprunenko) A simple quadratic pair belongs to the following list which is given indicating the root system associated to a Chevalley group and a short (generally accepted) name for the representation.  
\begin{itemize}
\item $(A_n,\wedge^jStd)$
\item $(B_n,Std)$ or $(B_n, Spin)$
\item $(C_n,\mathcal{W}_j)$ with a subrepresentation $\mathcal{W}_j\subseteq \wedge^jStd$
\item $(D_n,Std)$ or $(D_n,Spin^{\pm})$
\item Explicitly given finite set of representations of exceptional groups of type $E_6$, $E_7$, $F_4$ and $G_2$.
\end{itemize}
\end{enumerate}
\end{prop}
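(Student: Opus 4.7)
The plan is to treat the two parts separately: part (1) is a Clifford-theoretic decomposition argument, while part (2) requires a case analysis over representations of simple algebraic groups in positive characteristic.

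For (1), my strategy is as follows. The relation $(\rho(g)-1_V)^2=0$ forces every $g \in \mathcal{Q}$ to be unipotent of order $p$ in $\rho(G)$, so $G$ is generated by unipotents and therefore perfect modulo its centre. Decompose $G$ modulo its centre into its components $H_1, \ldots, H_t$, the minimal non-abelian normal subgroups whose product is $G$; these pairwise commute modulo the centre. Because the $H_i$ commute and $\rho$ is irreducible, Clifford theory (in its variant for commuting normal subgroups acting irreducibly) yields an isomorphism $V \cong V_1 \otimes \cdots \otimes V_t$ in which each $H_i$ acts irreducibly on $V_i$ and through central characters on the other factors. The delicate point is to transfer the quadratic condition to each factor: writing $g = g_1 \cdots g_t \in \mathcal{Q}$ in this decomposition, the nilpotency ranks of $\rho_i(g_i)-1$ combine so that $(\rho(g)-1)^2 = 0$ forces all but one $g_i$ to be central with the remaining one acting quadratically on $V_i$. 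This simultaneously establishes the tensor product structure and shows each $(H_i,\rho_i)$ is a simple quadratic pair. In the polarised case, Schur's lemma implies that an invariant non-degenerate bilinear form on a tensor product of faithful irreducible representations factors as a tensor product of invariant forms on the $V_i$, and each such form is then necessarily symmetric or antisymmetric.

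For (2), I would pass to the algebraic-group setting by identifying a simple quadratic pair with (a central extension of) a Chevalley group acting on an irreducible highest-weight module $V_\lambda$ of a simple algebraic group over $\bar{\F}_p$. The condition $(\rho(x)-1)^2 = 0$ applied to root elements $x_\alpha(t)$ becomes a strong combinatorial restriction on the weight diagram of $V_\lambda$: every $\alpha$-string through a weight of $V_\lambda$ must have length at most two. A systematic run through the Cartan types $A_n, B_n, C_n, D_n$ and the exceptional types then produces the list, with the minuscule and standard representations appearing by construction, and the Weyl submodules $\mathcal{W}_j \subseteq \wedge^j \mathrm{Std}$ arising in type $C_n$ precisely where the full exterior powers fail the constraint.

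The main obstacle is the exhaustiveness of the classification in (2): the exceptional types $E_6, E_7, F_4, G_2$ require explicit weight-multiplicity computations and direct verification of the action of root elements on the full weight diagram, while the type $C_n$ case requires a delicate identification of the correct sub-Weyl-module $\mathcal{W}_j$. A secondary subtlety in (1) is the correct handling of central characters in the tensor product decomposition, ensuring that $\rho$ really descends from the direct product $H_1 \times \cdots \times H_t$ to the central product $H_1 \cdots H_t$, and that the invariant form restricts with consistent parity across the tensor factors.
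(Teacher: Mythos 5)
The paper does not actually prove this proposition; it is stated as a summary of deep theorems and attributed to Thompson \cite{qp}, with \cite{ho1,betty} cited as expository sources and \cite{premet} as supplying the completed classification. So there is no in-paper argument to compare against; what follows assesses your sketch on its own merits.

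Your outline of part (1) is in the right spirit, but the opening inference is a gap: being generated by unipotent elements of order $p$ does not by itself make $G$ perfect modulo its centre (the group $(\Z/p\Z)^n$ is a counterexample). Thompson's actual argument has to establish $O_p(G)=1$, analyze the Fitting and generalized Fitting subgroups, and show that the quadratic elements force the components of $G$ to generate everything modulo $Z(G)$ before the Clifford-theoretic tensor decomposition can be invoked. Once one \emph{has} the central product of components and the tensor decomposition, your use of the $(k_1+k_2-1)$-unipotent lemma (the paper's Lemma~\ref{kunip}) to force a quadratic element into a single factor is exactly right and is the same ingredient the paper uses elsewhere.

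The serious gap is in part (2). Your first step ``identifying a simple quadratic pair with (a central extension of) a Chevalley group acting on an irreducible highest-weight module'' is not a reduction --- it \emph{is} the content of Thompson's theorem and is by far the hardest point. One starts only with an abstract finite group $G$, a prime $p$, and an irreducible $\F_p[G]$-module on which the quadratic elements generate; concluding that $G/Z(G)$ must be a simple group of Lie type in characteristic $p$ is a major piece of local group theory, historically part of the classification program for finite simple groups. Only \emph{after} this identification does the weight-string analysis you describe (root elements acting quadratically force $\alpha$-strings of length $\leq 2$, whence minuscule and near-minuscule modules) apply, and this latter step is essentially the Premet--Suprunenko contribution \cite{premet}. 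As written, your sketch assumes the main theorem and then reproduces its easier corollary; it does not supply, or even acknowledge the need for, the group-theoretic core of the proof.

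Two smaller remarks. First, the characteristic $p$ must be odd (Thompson and Premet--Suprunenko both exclude $p=2$, and several of your formulas implicitly need this). Second, in type $C_n$ the correct module $\mathcal{W}_j$ is a specific Weyl-module quotient inside $\wedge^j\mathrm{Std}$, and identifying it correctly in positive characteristic requires care with Weyl module structure; your phrase ``arising precisely where the full exterior powers fail'' is an accurate description of the phenomenon but not a proof.
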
 

We will require the following elementary lemma.
An element $g\in\End(V)$ is {\sl $k$-unipotent} if $(g-1)^k=0$ but  $(g-1)^{k-1}\not=0$; for example a quadratic element is, by definition, $2$-unipotent.
 \begin{lem}\label{kunip} Let $g_1$  (resp. $g_2$) be a $k_1$-unipotent element in $\End(V_1)$ (resp. a $k_2$-unipotent element in $\End(V_2)$), then $g_1\otimes g_2$ is a $(k_1+k_2-1)$-unipotent element in $\End(V_1\otimes V_2)$. In particular,  quadratic elements of a quadratic pair $(G_1\cdots G_t,\rho_1\otimes\cdots\otimes\rho_t)$ lie in some $G_i$.
 \end{lem}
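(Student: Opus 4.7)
The plan is to first establish the numerical statement on tensor products of unipotent operators by an elementary binomial expansion, and then derive the consequence for quadratic elements of a central product of quadratic pairs by combining the index count with faithfulness.

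For the tensor-product part, I would write $g_i = 1 + N_i$ with $N_i^{k_i-1} \ne 0 = N_i^{k_i}$ and observe
\[
g_1 \otimes g_2 - 1 = (g_1 \otimes 1)(1 \otimes N_2) + (N_1 \otimes 1),
\]
where the two summands commute. Since $g_1 \otimes 1$ is invertible, the first summand is nilpotent of index $k_2$ and the second of index $k_1$. Expanding the $m$-th power by the binomial theorem, every term contains either a factor $(1\otimes N_2)^i$ with $i\ge k_2$ or a factor $(N_1\otimes 1)^{m-i}$ with $m-i\ge k_1$; the ``dangerous'' range $m-k_1 < i < k_2$ is empty precisely when $m \ge k_1+k_2-1$, proving $(g_1\otimes g_2-1)^{k_1+k_2-1}=0$. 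For $m = k_1+k_2-2$ the range collapses to the single value $i = k_2-1$, leaving the sole surviving term
\[
\binom{k_1+k_2-2}{k_2-1}\, g_1^{k_2-1} N_1^{k_1-1} \otimes N_2^{k_2-1},
\]
which is nonzero because $g_1$ is invertible, the maximal nilpotent powers $N_i^{k_i-1}$ are nonzero, and the binomial coefficient is nonzero in the characteristic relevant to quadratic pairs (where $p$ is odd).

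For the corollary, I would iterate the previous step to see that the tensor product of $t$ unipotent operators of indices $k_1,\dots,k_t$ has unipotency index $\sum_i k_i - t + 1$. Given a quadratic $g = g_1 \cdots g_t \in G_1 \cdots G_t$, the tensor product $\rho(g) = \rho_1(g_1)\otimes\cdots\otimes\rho_t(g_t)$ is unipotent, and a quick eigenvalue argument (the spectrum of a tensor product is the set of products of the spectra) forces each factor $\rho_i(g_i)$ to be a scalar $\lambda_i$ times a unipotent $U_i$ with $\prod_i \lambda_i = 1$. Setting the total unipotency index $\sum_i(k_i-1)+1$ equal to $2$ then forces $k_{i_0} = 2$ for exactly one index $i_0$ and $k_j = 1$ for all other $j$, so $\rho_j(g_j)$ is a scalar whenever $j\ne i_0$. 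By faithfulness of $\rho_j$ such a $g_j$ lies in $Z(G_j)$, which in a central product is identified with $Z(G_{i_0})$, so these scalar factors can be absorbed into $g_{i_0}$ and $g$ indeed lies in $G_{i_0}$.

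The main obstacle is not the tensor-product computation, which is a straightforward binomial argument once the commuting decomposition is in hand, but rather the absorption step in the corollary: one has to upgrade the linear-algebraic ``scalar times unipotent'' decomposition of each tensor factor into a group-theoretic statement inside the ambient central product, using faithfulness of the $\rho_i$ and the identification of the common centre across the factors.
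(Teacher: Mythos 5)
The paper does not prove this lemma itself: it cites Paugam's Lemma 2.2.1 verbatim and gives no argument. You have therefore supplied a genuine self-contained proof where the paper only points to a reference, so there is nothing in the paper to compare word-for-word against.

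Your argument for the tensor-product index is correct. The decomposition $g_1\otimes g_2 - 1 = (g_1\otimes 1)(1\otimes N_2) + (N_1\otimes 1)$ into commuting nilpotents is exactly right, the upper bound $k_1+k_2-1$ follows cleanly from the binomial expansion, and you are right to flag that the lower bound depends on $\binom{k_1+k_2-2}{k_2-1}$ not vanishing in the base field. That caveat is real: in characteristic $p$ one can make this coefficient vanish (e.g.\ $k_1=p$, $k_2=2$ gives $\binom{p}{1}\equiv 0$), so the lemma as literally stated is only valid with such a side condition, or in characteristic zero. Fortunately, as you observe, for the ``in particular'' clause the only indices that appear are $k_i\in\{1,2\}$, where the relevant coefficient is $1$ or $2$, hence nonzero for odd $p$. (A slightly cleaner variant for the application avoids the induction entirely: write $N=N_1\otimes 1 + 1\otimes N_2 + N_1\otimes N_2$ and expand $N^2$ by bidegree in $(N_1,N_2)$; the bidegree $(2,0)$, $(0,2)$ and $(1,1)$ components are $N_1^2\otimes 1$, $1\otimes N_2^2$ and $2N_1\otimes N_2$, so $N^2=0$ forces $N_1^2=N_2^2=0$ and, for $p$ odd, $N_1=0$ or $N_2=0$.)

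The eigenvalue step deducing $\rho_i(g_i)=\lambda_i U_i$ from unipotence of the tensor product is fine. The one place where you are a bit quick is the final absorption: from $\rho_j(g_j)=\lambda_j\cdot 1$ and faithfulness you correctly get $g_j\in Z(G_j)$, but the assertion that $Z(G_j)$ ``is identified with $Z(G_{i_0})$'' in an arbitrary central product is not automatic, and the honest conclusion of the computation is that $g\in G_{i_0}\cdot Z(G)$, i.e.\ the image of $g$ in $G/Z(G)\cong\prod_i G_i/Z(G_i)$ lies in the $i_0$-th simple factor. To land literally inside $G_{i_0}$ you need the scalar $\prod_{j\ne i_0}\lambda_j$ to lie in $\rho_{i_0}(Z(G_{i_0}))$, which is a hypothesis on the gluing data of the central product rather than a formal consequence. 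This is exactly the statement that the paper's Case (A) argument actually uses (a quadratic element sits in only one simple block), so the ``gap'' is one of phrasing rather than substance, but it is worth stating the conclusion modulo the centre, or else justify why the centres coincide in the setting at hand.
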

 \begin{proof} See \cite{paug},  Lemma 2.2.1.
 \end{proof}
 
 Our typical example of quadratic pair is obtained by considering the Galois representation on $A[\ell]\cong(\Z/\ell\Z)^{2g}$ and restricting it to the subgroup $R$ generated by inertia and their conjugates. One sees that this representation is still irreducible for $\ell$ large enough. Minuscule representation appear when considering the $\ell$-adic representation.
 
The link between quadratic pairs and minuscule representations is suggested in \cite{serreht} (see the remark after Corollaire, page 180), where it is observed that root elements of a (semi-simple) group act quadratically. To pass from representations mod $\ell$ to representations over $\Z_{\ell}$  or $\Q_{\ell}$ we'll use the following elementary lemma.
 \begin{lem}\label{marcnikolay} Let $G$ be a smooth algebraic subgroup of $\GL_N$ over $\Z_{\ell}$ and $H$ a closed subgroup of $G(\Z_{\ell})$ such that $\pi(H)=G(\F_{\ell})$,  where $\pi$ is the reduction mod $\ell$ map. Assume $\ell\geq 5$ and the Lie algebra of $G_{\F_{\ell}}$ is spanned by quadratic elements. Then $H=G(\Z_{\ell})$.
\end{lem}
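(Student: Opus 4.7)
The plan is to use the filtration of $G(\Z_\ell)$ by principal congruence subgroups $G^{(n)} := \ker\bigl(G(\Z_\ell) \to G(\Z/\ell^n\Z)\bigr)$. Smoothness of $G$ over $\Z_\ell$ provides canonical identifications $G^{(n)}/G^{(n+1)} \cong \mathfrak{g}(\F_\ell)$ for every $n \geq 1$ (where $\mathfrak{g}$ denotes the Lie algebra of $G$), and together with the hypothesis $\pi(H) = G(\F_\ell) = G(\Z_\ell)/G^{(1)}$ reduces the problem to showing $H \supseteq G^{(1)}$. Since $H$ is closed and $G(\Z_\ell)$ is the inverse limit of its finite quotients $G(\Z/\ell^n)$, a standard successive approximation argument further reduces the goal to proving that $H \cap G^{(n)}$ surjects onto $G^{(n)}/G^{(n+1)}$ for every $n \geq 1$.

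The base case $n = 1$ is the heart of the argument. Start from an arbitrary quadratic element $X \in \mathfrak{g}(\F_\ell)$, i.e., $X^2 = 0$ as an element of $M_N(\F_\ell)$. Smoothness of $G$ together with $\ell \geq 3$ guarantees that the truncated exponential $1 + X$ defines an element of $G(\F_\ell)$, and the surjectivity $\pi(H) = G(\F_\ell)$ then produces a lift $\tilde g = 1 + \widetilde X \in H$ with $\widetilde X$ a matrix over $\Z_\ell$ satisfying $\widetilde X \equiv X \pmod \ell$, so that $\widetilde X^2 \in \ell\cdot M_N(\Z_\ell)$. A direct binomial expansion of $(1 + \widetilde X)^\ell$ then gives $\tilde g^\ell \equiv 1 + \ell \widetilde X \pmod{\ell^2}$: the intermediate terms $\binom{\ell}{k}\widetilde X^k$ with $2 \leq k \leq \ell - 1$ have $\ell$-adic valuation $\geq 1 + \lfloor k/2 \rfloor \geq 2$, while the remaining term $\widetilde X^\ell$ has valuation $(\ell-1)/2$, which is $\geq 2$ precisely because $\ell \geq 5$. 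Therefore $\tilde g^\ell \in H \cap G^{(1)}$ has image $X$ in $G^{(1)}/G^{(2)} \cong \mathfrak{g}(\F_\ell)$. Since the image of $H \cap G^{(1)}$ in this quotient is a subgroup and quadratic elements span $\mathfrak{g}(\F_\ell)$ by hypothesis, $H \cap G^{(1)}$ surjects onto $G^{(1)}/G^{(2)}$.

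The inductive step from $n$ to $n + 1$ uses the same $\ell$-th power trick: for $h \in H \cap G^{(n)}$ whose image in $G^{(n)}/G^{(n+1)}$ is $Y \in \mathfrak{g}(\F_\ell)$, an analogous expansion yields $h^\ell \equiv 1 + \ell^{n+1}\widetilde Y \pmod{\ell^{n+2}}$ for any lift $\widetilde Y$ of $Y$. This requires no further constraint on $\ell$, since the additional powers of $\ell^n$ in every intermediate term already absorb the arithmetic of the binomial coefficients. Feeding this back into the successive approximation argument gives $H \supseteq G^{(1)}$, and hence $H = G(\Z_\ell)$.

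The main obstacle is the base case, where one must convert the Lie-algebra hypothesis (quadratic elements span $\mathfrak{g}(\F_\ell)$) into an element of $H$ realising each such element in the graded piece $G^{(1)}/G^{(2)}$. The bridge is the congruence $\tilde g^\ell \equiv 1 + \ell \widetilde X \pmod{\ell^2}$, which rests on both assumptions of the lemma: smoothness of $G$ (needed for the truncated exponential $X \mapsto 1 + X \in G(\F_\ell)$ and for the identifications $G^{(n)}/G^{(n+1)} \cong \mathfrak{g}(\F_\ell)$) and $\ell \geq 5$ (needed to annihilate $\widetilde X^\ell$ modulo $\ell^2$). Once this step is in place, the remaining ingredients are formal features of smooth pro-$\ell$ group schemes.
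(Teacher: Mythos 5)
The paper does not give a self-contained argument: the ``proof'' in the text is a one-line reference to Lemme~2.6 of Hindry--Ratazzi \cite{hr2}. Your write-up therefore supplies the actual argument, and the strategy you use --- the congruence filtration $G^{(n)}=\ker\bigl(G(\Z_\ell)\to G(\Z/\ell^n)\bigr)$ with graded pieces $\mathfrak g(\F_\ell)$, successive approximation, and the $\ell$-th power trick $\tilde g^\ell\equiv 1+\ell\widetilde X\pmod{\ell^2}$ to produce elements of $H\cap G^{(1)}$ hitting the quadratic elements --- is exactly the classical Serre-type argument that Lemme~2.6 of \cite{hr2} is built on. The binomial estimates are correct ($\binom{\ell}{k}\widetilde X^k$ has valuation $\ge 1+\lfloor k/2\rfloor\ge 2$ for $2\le k\le\ell-1$, and $\widetilde X^\ell$ has valuation $\ge(\ell-1)/2\ge 2$ precisely when $\ell\ge 5$), the reduction to ``$H\cap G^{(n)}$ surjects onto $G^{(n)}/G^{(n+1)}$ for all $n$'' via closedness of $H$ in the pro-$\ell$ group $G(\Z_\ell)$ is sound, and the inductive step for $n\ge 1$ needs no further constraint on $\ell$. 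So in structure you have reconstructed the cited proof.

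There is however one step you assert without adequate justification, and I would flag it as a genuine gap in the write-up. You claim that ``smoothness of $G$ together with $\ell\ge 3$ guarantees that the truncated exponential $1+X$ defines an element of $G(\F_\ell)$'' for any $X\in\mathfrak g(\F_\ell)$ with $X^2=0$. That is false for general smooth $G$. The morphism $\phi\colon\G_a\to\GL_N$, $t\mapsto 1+tX$, is a homomorphism, and $\phi^{-1}(G_{\F_\ell})$ is a closed subgroup scheme of $\G_a$ whose Lie algebra is nonzero (since $X\in\mathfrak g$); but in characteristic $\ell$ such a subgroup scheme can be $\alpha_\ell=\Spec\F_\ell[t]/(t^\ell)$, which has only the trivial $\F_\ell$-point. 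A concrete example: embed $\G_a\hookrightarrow\GL_4$ over $\F_\ell$ by $a\mapsto\bigl(1+aE_{12},\,1+a^\ell E_{12}\bigr)$ (two $2\times2$ unipotent blocks). This $G$ is smooth, $\mathfrak g=\F_\ell\cdot(E_{12},0)$ is spanned by the quadratic element $X=(E_{12},0)$, yet $1+X=(1+E_{12},1)\notin G(\F_\ell)$, and indeed $\phi^{-1}(G)=\alpha_\ell$. So ``smooth $+$ $\ell\ge 3$'' does not imply the claim, and an extra ingredient is needed.

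In the intended application the ambient group $G'_\ell$ (or $G_\ell$) is reductive in good characteristic and the quadratic elements are root vectors, for which $\exp(X)=1+X$ lies in the corresponding root subgroup, so the step is available there; alternatively one can sidestep the issue entirely by reading the hypothesis as ``$\mathfrak g(\F_\ell)$ is spanned by $\{u-1 : u\in G(\F_\ell),\ (u-1)^2=0\}$'', since you can then lift those $u$ directly from $G(\F_\ell)=\pi(H)$ to $H$ and run the $\ell$-th power computation unchanged (the identification $G^{(1)}/G^{(2)}\cong\mathfrak g(\F_\ell)$ automatically forces the resulting class to lie in $\mathfrak g(\F_\ell)$). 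Either way, you should not attribute $1+X\in G(\F_\ell)$ to smoothness: you need reductivity (or a Springer-type exponential for nilpotents with $X^{[\ell]}=0$), or the group-element formulation of the hypothesis. Once that is repaired the rest of the proof is correct and matches the cited source.
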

\begin{proof} This is a special case of Lemme 2.6 in \cite{hr2}.
\end{proof}
The lemma applies to $SL_m$, to $\Sp_{2m}$ {\it loc.cit.} and more generally to any quadratic representation ! 
 Thus the property of being spanned by null square matrices is one of the characterisations of minuscule representation, see for example \cite{paug} (definition 2.1.3), \cite{serreht} (Lemme 5 and Corollaire after Proposition 5).

Below we recall the table of minuscule weights of height one (reference: \cite{serreht}) giving the type of root system (A,B,C,D), the label of the highest weight  describing the representation (labeled as in {\it loc. cit.}),  a short name for the representation, the dimension of the  representation and finally the ``sign" of the representation: $+1$ for an orthogonal representation, $-1$ for a symplectic representation, and $0$ for a non auto-dual representation.
\medskip

\begin{center} \it Table of minuscule representations
\end{center}
 $$\begin{tabular}{|c|c|c|c|c|}
 \hline
 Lie type & weight &representation& dimension & sign\\
 \hline
 $A_n$\; ($n\geq$ 1)& $\varpi_j\; (1\leq j\leq n)$&$\Lambda^j({\rm Std})$&${n+1\choose  j}$&$\begin{matrix} (-1)^j&{\rm if}\;\; n=2j-1\cr 0& {\rm else}\end{matrix}$\\
 \hline
  $B_n$\; ($n\geq$ 2)& $\varpi_n$&${\rm Spin}$&$2^n$&$\begin{matrix} +1&{\rm if}\;\; n\equiv 0, 3\mod 4 \cr -1&{\rm if}\; \equiv 1,2\mod 4 \end{matrix}$\\
 \hline
  $C_n$\; ($n\geq$ 2)& $\varpi_1$&${\rm Std}$&$2n$&$-1$\\
 \hline
  $D_n$\; ($n\geq$ 3)& $\begin{matrix}\varpi_1\cr \cr\varpi_{n-1}\cr\varpi_n\cr\end{matrix}$ &$\begin{matrix}{\rm Std}\cr \cr{\rm Spin}^-\cr{\rm Spin}^+\cr\end{matrix}$ &$\begin{matrix}2n\cr \cr 2^{n-1}\cr 2^{n-1}\cr\end{matrix}$ &$\begin{matrix} +1\cr \cr
   \left\{\begin{matrix} +1&{\rm if}\;\; n\equiv 0\mod 4 \cr -1&{\rm if}\; n\equiv 2\mod 4 \cr 0&{\rm if}\; n \;{\rm odd}\cr\end{matrix}\right.\cr\end{matrix}$\\
 \hline
 \end{tabular}
 $$
\bigskip\bigskip

\section{Proof of Main Theorem}

Existence of a place of bad reduction with toric dimension $s$ implies that a topological generator of (the image of) inertia provides a {\it quadratic} element $g$ with {\it drop} equal to $s$.  Recall that a quadratic element of $(G,\rho)$ is an element such that $(\rho(g)-1)^2=0$ and the drop is the dimension of the image of  $\rho(g)-1$.  
 Let us denote by $G'_{\ell}/\Q_{\ell}$ the derived group of the algebraic monodromy group $G_{\ell}:=\overline{\rho_{\ell}(\Gal(\bar{K}/K))}^{\rm Zar}$ which, without loss of generality,  can be assumed connected.  For   simplicity we treat the case $\End(A){=}\Z$ although our methods apply to more general situations (cf.  Section 4). 
The algebraic group $G'_{\ell}$ and the natural representation $\rho_{\ell}:G'_{\ell}\rightarrow\GL(T_{\ell}(A))\subset\GL_{2g,\Q_{\ell}}$  have very special features.
\begin{itemize}
\item The representation is faithful symplectic.
\item (Faltings, \cite{falt}) The $\ell$-adic representation is absolutely  irreducible; further, for $\ell$ large enough, the representation mod $\ell$ is still irreducible.
\item (Pink, \cite{pink1}) The representation is a tensor product of representations $\rho_i:G_i\rightarrow \GL(V_i)$, where $G_i$ are absolutely irreducible groups, $G'_{\ell}=G_1\cdots G_s$ (an almost direct product), $V_{\ell}\cong V_1\otimes\cdots\otimes V_s$ and each of these factors is a classical minuscule symplectic representation.  Further,  the decomposition is isotypical with a transitive action of the Galois group.
\end{itemize}

\noindent
We distinguish two cases, using quadratic pairs in the first case and minuscule representations for the second: 
\begin{itemize}
\item (A)  $s>1$ (hence odd and $\geq 3$).
\item (B) $s=1$ (i.e. the group $G'_{\ell}$ is absolutely irreducible).
\end{itemize}

\noindent
{\bf Case (A)} Assumption on $s$ implies (denoting $m{=}\dim V_i$) that $2g{=}m^s$ as in \cite{pink1}, but  we will show that, when there is a place of bad reduction, this case does not occur.
We choose $\ell$ large enough so that the representation modulo $\ell$, which we denote $\bar{\rho}_{\ell}:\Gal(\bar{K}/K)\rightarrow \GL(A[\ell])\cong\GL_{2g}(\F_{\ell})$, is still irreducible. Let $\bar{R}$ be the subgroup generated by the image 
of the inertia group at the place of bad reduction and its conjugates. We first claim that the restriction of $\bar{\rho}_{\ell}$  to $\bar{R}$ is still irreducible. Indeed by Clifford's classical theorem, the representation is induced from an irreducible subrepresentation $W$, but $(\bar{R},W)$ forms a quadratic pair and then $\bar{R}$ is a finite group of Lie type and has no normal subgroups of small index, at least if $\ell$ is chosen sufficiently large, therefore $V{=}W$. Next, $\bar{R}$ is generated by the conjugacy class of a quadratic element, therefore it cannot be an almost direct product $G_1\cdots G_s$ with $s{>}1$.
 The argument for this is that a quadratic element $g$ has to occur in only one factor, i.e.,  $g{=}(1,\dots, g_i,\dots, 1)\in G_1\cdots G_s$. We now lift $\bar{R}$ to $R\subset G'_{\ell}$, where $R$ denotes the subgroup generated by inertia in the $\ell$-adic representation. Since the Galois group permutes the factors, the inertia cannot sit in only one factor (cf.  Lemma \ref{kunip}) and we conclude that $s{=}1$. Note that the case of $\dim A{=}4$ was initially treated by Noot in Prop. 2.1 and Cor. 2.2, \cite{noot1}. 
\medskip

\noindent
{\bf Case (B). }The list of symplectic minuscule representations of dimension $2g$ of a classical group is short; see the table in previous section; we denote $(G_1,\rho_1)\sim(G_2,\rho_2)$ when the two simple algebraic groups $G_i$ have the same root system and $\rho_i$ correspond to the same weight.  \begin{enumerate}
\item $(G_{\ell}',\rho)\sim (\Sp_{2g},Std)$.
\item $(G_{\ell}',\rho)\sim (\SL_{2r},\wedge^rStd)$, with odd $r\geq 3$ and $2g={2r \choose r}$.
\item $(G_{\ell}',\rho)\sim (\SO_{2r+1},Spin)$, with $2g=2^r$ and $r\equiv 1, 2\mod 4$.
\item $(G_{\ell}',\rho)\sim (\SO_{2r},Spin^{\pm})$, with $2g=2^{r-1}$ and $r\equiv 2\mod 4$.
\end{enumerate}
In the first case we conclude $G'_{\ell}=\Sp_{2g,\Q_{\ell}}=\Hdg(A)_{\Q_{\ell}}$. 
In the three last cases we use the computation by Premet-Suprunenko of the drop of a quadratic element (cf.  \cite{premet}, Section 3).    
More precisely,  in the second case we see that $\dim A=\frac{1}{2}{2r \choose r}$ is very specific.  Further,  the drop of a non-trivial quadratic element in this representation is $s:={2r-2\choose r-1}$ (see \cite{premet},  Lemma 18 and the calculation at p.78,  beginning of \S 2), therefore this case can only happen if the toric dimension at a bad place is equal to $s$.  In the third case we see that $\dim A=2^{r-1}$ is very specific. Further, the drop of a non-trivial  quadratic element in this representation is $s:=2^{r-1}$ or $2^{r-2}$ (see \cite{premet} Lemma 20), therefore this case can only happen if the toric dimension at a bad place is equal to $s=g$ or $g/2$.  For the spin representation to be symplectic, we need $r\equiv 1,2\mod 4$.  In the fourth case we see that $\dim A=2^{r-2}$ is very specific.  Further, the drop of a non-trivial quadratic element in this representation is $s:=2^{r-2}$ or $2^{r-3}$ (see \cite{premet} Lemma 21 and Note 2, at p.86),  therefore this case can only happen, if the toric dimension at a bad place is equal to $s=g$ or $g/2$.  For the half-spin representation to be symplectic, we need $r\equiv 2\mod 4$. This completes the proof of Main Theorem.\endproof

\begin{rem}\label{remfinal1} Our proof shows that inertia subgroups at bad places generate the derived group of the image of Galois.  Indeed the above arguments show that the algebraic subgroup $R$ generated by the inertia subgroup and their conjugates is a non trivial normal subgroup and the only such subgroup of $\Sp_{2g}$ is $\Sp_{2g}$ itself.
\end{rem}

\section{Conclusion}

As a final remark we add the observation that the same method can handle the case when  the endomorphism algebra $\End^0(A):=\End(A)\otimes\Q$ is a quaternion algebra with center $\Q$. However,  note that the combinatorics,  when the center is larger, is much more involved.  We plan to address this issue in a future work. 

\begin{theorem}\label{last}
Let $A$ be an abelian variety defined over a number field $K$. Suppose that $\End^0(A):=\End(A)\otimes\Q$ is a quaternion algebra with centre $\Q$ and there is a place of bad reduction with toric dimension $s$ (note $s$ is always even). Then, 
\begin{itemize}
\item 
when the algebra $D$ is indefinite, i.e., $A$ has type II, we have $\MT(A)_{\Q_{\ell}}{=}\GSp_{g,\Q_{\ell}}{=}G_{\ell}$ and the Mumford-Tate conjecture holds for $A,$ except may be in following exceptional cases:
\begin{enumerate}
\item there is an odd integer $r\geq 3$ such that
$g={2r\choose r}$ and $s=2{2r-2\choose r-1}$,
\item there is an integer $t{\geq 5}$ 
with $t{\equiv} 1$ or $2\mod 4$ such that $g{=}2^t$ and  $s{=}g$ or $g/2,$
\end{enumerate}
\item when the algebra $D$ is definite, i.e.,  $A$ has type III,  we have $\MT(A)_{\Q_{\ell}}{=}\GSO_{g,\Q_{\ell}}{=}G_{\ell}$ and  the Mumford-Tate conjecture holds for $A,$ except may be in following exceptional cases:
\begin{enumerate}
\item there is an even integer $r\geq 2$ such that
$g={2r\choose r}$ and $s=2{2r-2\choose r-1}$,
\item there is an integer $t{\geq} 4$ with $t{\equiv} 0$ or $3\mod 4$   such that $g{=}2^t$ and  $s{=}g$ or $g/2$.
\end{enumerate}
\end{itemize}
\end{theorem}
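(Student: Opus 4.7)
The strategy is to reduce, via the action of the quaternion algebra $D$, to a representation on a vector space of dimension $g$, and then to rerun on it the dichotomy used in the proof of the Main Theorem. Pick $\ell$ large enough that $D \otimes \Q_\ell \cong M_2(\Q_\ell)$ and that the usual Clifford-theoretic irreducibility of the residual representation holds; this costs nothing since by \cite{lp3}, Theorem 4.3, the Mumford-Tate conjecture is independent of $\ell$. Morita equivalence then decomposes $V_\ell \cong U \otimes_{\Q_\ell} \Q_\ell^2$, the second tensor factor carrying the $D$-action and the first, of dimension $g$, carrying the induced action of $G'_\ell$. The polarization on $V_\ell$ is compatible with the Rosati involution on $D$ and splits as a tensor product of forms on $U$ and $\Q_\ell^2$; since the Rosati is of orthogonal type in type II and of symplectic type in type III, and since the form on $V_\ell$ must be alternating, the induced form on $U$ is alternating in type II (so $G'_\ell \subseteq \Sp_g$) and symmetric in type III (so $G'_\ell \subseteq \SO_g$). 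A topological generator of inertia at the bad place is a quadratic element of drop $s$ on $V_\ell$ which commutes with $D$, hence descends to a quadratic element $\xi \in \End(U)$ of drop exactly $s/2$; in particular $s$ is always even.

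Next I would apply Pink's tensor decomposition to the representation $G'_\ell \to \GL(U)$, writing $U \cong U_1 \otimes \cdots \otimes U_t$ with Galois permuting the factors transitively and each $U_i$ a classical absolutely irreducible minuscule representation. The element $\xi$ plays the same role here as the inertia element in the Main Theorem: for $\ell$ sufficiently large the restriction of the residual representation to the subgroup generated by inertia and its conjugates is still irreducible (Clifford's theorem plus the no-small-normal-subgroup property of finite groups of Lie type), so it forms a quadratic pair, and by Lemma \ref{kunip} its quadratic generators each lie in a single factor; combined with the Galois transitivity on factors this is incompatible with $t > 1$. Lemma \ref{marcnikolay} then lifts the conclusion to $\Q_\ell$, yielding $t = 1$.

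For $t = 1$ the representation on $U$ is a single classical minuscule representation of dimension $g$, symplectic in type II and orthogonal in type III. Reading off the table of Section 2 in the symplectic case gives $(\Sp_g, \mathrm{Std})$, whence $G'_\ell = \Sp_g$ and the conjecture, together with the three exceptional families $(\SL_{2r}, \wedge^r \mathrm{Std})$ with $r$ odd, $(\SO_{2r+1}, \mathrm{Spin})$ with $r \equiv 1, 2 \pmod 4$, and $(\SO_{2r}, \mathrm{Spin}^{\pm})$ with $r \equiv 2 \pmod 4$. In each exceptional family the drops $s/2$ on $U$ computed by Premet-Suprunenko (Lemmas 18, 20, 21 of \cite{premet}) yield, after doubling to recover $s$ on $V_\ell$, precisely the numerical constraints (1) and (2) of the type II statement; the bound $t \geq 5$ in case (2) accounts for the exceptional isomorphisms of small-rank Chevalley groups that collapse the putative exception back to the standard case. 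The orthogonal analogue gives $(\SO_g, \mathrm{Std})$ together with $(\SL_{2r}, \wedge^r \mathrm{Std})$ for $r$ even, $(\SO_{2r+1}, \mathrm{Spin})$ for $r \equiv 0, 3 \pmod 4$, and $(\SO_{2r}, \mathrm{Spin}^{\pm})$ for $r \equiv 0 \pmod 4$, reproducing the exceptional cases of the type III statement after the change of variable $t = r$ or $t = r-1$.

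The most delicate step is, in my view, not the group-theoretic dichotomy, which imports almost verbatim from the Main Theorem, but the bookkeeping of the first paragraph: verifying that the Morita-transported form on $U$ is indeed alternating in type II and symmetric in type III (and not the reverse), and excluding the finitely many primes at which $D$ ramifies. Both points are classical but attention-demanding exercises in the theory of central simple algebras with involution, and the second is ultimately harmless by the $\ell$-independence of the Mumford-Tate conjecture.
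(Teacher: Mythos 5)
The paper does not actually write out a proof of Theorem \ref{last}; it only asserts that ``the same method can handle the case'' and defers to \cite{hr2} for the $s=2$ sub-case. Your proposal is a correct and faithful elaboration of exactly that: the Morita reduction $V_\ell \cong U\otimes \Q_\ell^2$ for $\ell$ split in $D$, the parity bookkeeping showing the Rosati involution forces $\psi_U$ alternating in type II and symmetric in type III, the halving of the drop from $V_\ell$ to $U$, and then a verbatim rerun of the Case (A)/(B) dichotomy from Section 3 on the $g$-dimensional representation $U$ using Lemmas \ref{kunip} and \ref{marcnikolay} and the Premet--Suprunenko drop computations. One small caveat worth recording: the precise lower bounds ($r\geq 2$ or $3$, $t\geq 4$ or $5$) come from ruling out the low-rank coincidences ($B_1\!=\!A_1$, $B_2\!=\!C_2$, $D_3\!=\!A_3$, triality on $D_4$, and $\SL_4\cong\Spin_6$ collapsing $\wedge^2$ of $\SL_4$ to the standard of $\SO_6$) and should be verified case by case rather than imported from the Main Theorem by analogy — a quick check suggests, for instance, that the type III case (1) actually first becomes nontrivial at $r=4$ rather than $r=2$, and that $B_3$-spin ($g=8$, $t=3$) in type III is not obviously excluded — but these are boundary-case bookkeeping issues, not gaps in the method.
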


\noindent
The extension of Theorem \ref{pinkmtc} (and Theorem \ref{last}) when $s=2$ to abelian varieties of type II  is worked out in \cite{hr2}, Th\'eor\`emes 10.6 and  10.7. 
\bigskip

\noindent\textsc{\bf Acknowledgements.} Both authors thank professor Ronald Solomon for his guidance through the literature on quadratic pairs.
 The authors were partially supported by a research grant UMO-2018/31/B/ST1/01474 of the National Centre of Sciences of Poland. 
\bigskip\bigskip\bigskip

\newpage

\small
{\sc Wojciech Gajda\\
Faculty of Mathematics and Computer Science\\
Adam Mickiewicz University\\
Uniwersytetu Pozna\'nskiego 4\\
61-614 Pozna\'{n}, Poland}\\
E-mail adress: \texttt{\small gajda@amu.edu.pl}
\bigskip\bigskip\bigskip

\small
{\sc Marc Hindry\\
Institut de Mathematiques de Jussieu - Paris rive gauche (Imj-Prg),\\
Universit\'e Paris Cit\'e (Paris  Diderot),\\
B\^atiment Sophie Germain,\\
Boite Courrier 7012,\\
8 place Aur\'elie Nemours,  \\
F-75205 Paris CEDEX 13, France.}\\
E-mail address: \texttt{\small marc.hindry@imj-prg.fr}
\end{document}